\documentclass[11pt]{article}
\usepackage{amsmath,amssymb,amsthm,mathtools}
\usepackage[margin=1in]{geometry}
\usepackage{enumitem}
\usepackage{microtype}
\usepackage[hidelinks]{hyperref}

\newtheorem{statement}{Statement}[section]
\newtheorem{theorem}[statement]{Theorem}
\newtheorem{lemma}[statement]{Lemma}
\newtheorem{proposition}[statement]{Proposition}

\newtheorem{remark}[statement]{Remark}
\newtheorem{definition}[statement]{Definition}

\newcommand{\R}{\mathbb{R}}
\newcommand{\Pp}{\mathbb{P}}
\newcommand{\Ee}{\mathbb{E}}
\newcommand{\1}{\mathbf 1}
\newcommand{\norm}[1]{\left\lVert #1\right\rVert}
\newcommand{\ip}[2]{\left\langle #1,#2\right\rangle}

\numberwithin{equation}{section}

\allowdisplaybreaks[4]
\begin{document}

\title{Non-blowup of stochastic heat equations  by noise}
\author{Yunfeng Chen$^{1}$,
		Tusheng Zhang$^{2,3}$}
\footnotetext[1]{\, School of Mathematics, Hefei University of Technology, Hefei, China.}
	\footnotetext[2]{\, School of Mathematics, University of Science and Technology of China, Hefei, China.}
	\footnotetext[3]{\, Department of Mathematics, University of Manchester, Manchester M13 9PL, United Kingdom. Email: tusheng.zhang@manchester.ac.uk }
\date{}
\maketitle
\begin{abstract}
Consider the  heat equation: 
\begin{equation}\label{eq:00}
\begin{cases}
\displaystyle
 du(t,x)=\frac12\Delta u(t,x)\,dt+b(u(t,x))\,dt,
 &t>0,\ x\in D,\\[1mm]
 u(t,x)=0,&t\ge0,\ x\in\partial D,\\
 u(0,x)=u_0(x),&x\in D.
\end{cases}
\end{equation}
It is well known that superlinear growth of the coefficient $b$ will result in a blowup of the solution $u$ at a finite time. In this paper, we show that a random noise will prevent the explosion of the solution. More precisely, we show that the stochastic heat equation:
\begin{equation}\label{eq:01}
\begin{cases}
\displaystyle
 du(t,x)=\frac12\Delta u(t,x)\,dt+b(u(t,x))\,dt+u(t,x)^n\,dB_t,
 &t>0,\ x\in D,\\[1mm]
 u(t,x)=0,&t\ge0,\ x\in\partial D,\\
 u(0,x)=u_0(x),&x\in D.
\end{cases}
\end{equation}
has a global solution if 
$$
 z\,b(z)
 \le C_b(1+z^2)+\eta |z|^{2n},
 \qquad z\in\R.
$$
for some $\eta\in [0, \frac{1}{2})$, here $n$ is an arbitrary, but fixed integer, $B_t, t\geq 0$ is a Brownian motion. Truncation and comparison techniques play an important role.
\vskip 0.4cm
\noindent\textbf{Keywords:} Stochastic heat equations;  blowup; non-explosion; comparison principles.
\vskip 0.3cm
\noindent
	{\bf AMS Subject Classification:} Primary 60H15;  Secondary 35R60.
\end{abstract}
 
\section{Introduction and main result}

Let $D\subset\R^d$ be a bounded domain with smooth boundary. Consider the heat equation

\begin{equation}\label{eq:pde}
\begin{cases}
\displaystyle
 du(t,x)=\frac12\Delta u(t,x)\,dt+b(u(t,x))\,dt,
 &t>0,\ x\in D,\\[1mm]
 u(t,x)=0,&t\ge0,\ x\in\partial D,\\
 u(0,x)=u_0(x),&x\in D.
\end{cases}
\end{equation}
It is well known that the solution $u$ of the heat equation will explode at  a finite time if the reaction term $b$ has more than linear growth.
In this paper, we will investigate how a random noise can prevent the blowup of the solution of the heat equation. More precisely,
let any $n\in\mathbb N$, and let $(B_t)_{t\ge0}$ be a real-valued Brownian motion on a filtered probability space satisfying the usual conditions. We are concerned with the global well-posedness of  the stochastic heat equation:
\begin{equation}\label{eq:spde}
\begin{cases}
\displaystyle
 du(t,x)=\frac12\Delta u(t,x)\,dt+b(u(t,x))\,dt+u(t,x)^n\,dB_t,
 &t>0,\ x\in D,\\[1mm]
 u(t,x)=0,&t\ge0,\ x\in\partial D,\\
 u(0,x)=u_0(x),&x\in D.
\end{cases}
\end{equation}

We will show that the stochastic heat equation \eqref{eq:spde} admits a unique global solution under reasonable growth conditions on the reaction term. In particular, the main results says that if $b=0$, the equation
\eqref{eq:spde} has a global solution for any interger $n\in N$, which is somehow surprising. \\

It has been proved in the literature for various type of partial differential equations that adding certain noise prevents the blowup of the solutions, see the references
\cite{TY}, \cite{RTW}, \cite{HLL} and \cite{CZZ}.
However, in order to obtain the global energy eatimate of the solution, the noises added in these articles are all non-local in the sense that the coefficients of the noise involve the Hilbert space  norms of the solution, something  like $ |u(t)|_H^m\sigma(u(t, x))$.
It is very nature to ask if the local pathwise interaction in the noise can also stop the blowup of the solution. The present paper gives an affirmative answer.

The reaction term is assumed to satisfy the following conditions.

\medskip
\noindent
\textbf{Assumption (B).}
The function $b:\R\to\R$ is locally Lipschitz, $b(0)=0$, and there exist constants $C_b\ge0$ and $\eta\in[0,\frac12)$ such that
\begin{equation}\label{eq:b-growth}
 z\,b(z)
 \le C_b(1+z^2)+\eta |z|^{2n},
 \qquad z\in\R.
\end{equation}

\begin{remark}[Interpretation of the growth condition]\label{rem:growth-interpretation}
Condition \eqref{eq:b-growth} is a one-sided condition: it controls only the \emph{outward} part of the reaction. No corresponding lower bound on $z b(z)$ is required. In particular, strongly dissipative drifts are allowed.

The usual one-sided linear-growth assumption
\[
 z b(z)\le C(1+z^2)
\]
is a special case of \eqref{eq:b-growth} with $\eta=0$. For $n\ge2$, however, \eqref{eq:b-growth} also permits genuinely superlinear outward growth. For example,
\[
 b(z)=\lambda z^{2n-1}+r(z),
 \qquad \lambda<\frac12,
\]
is admissible whenever $r$ is locally Lipschitz, $r(0)=0$, and $z r(z)\le C(1+z^2)$.
\end{remark}


\begin{definition}
We say that an adapted process $u$ is a global solution to equation \eqref{eq:spde} if, for every $T<\infty$,
\begin{equation}\label{eq:solution-class}
 u\in C([0,T];L^2(D))\cap L^2(0,T;H_0^1(D))
 \quad\text{a.s.}
\end{equation}
and $u$ has a jointly measurable representative such that
\begin{equation}\label{eq:Linfty-local}
 \sup_{0\le t\le T}\norm{u(t)}_{L^\infty(D)}<\infty
 \quad\text{a.s.,}
\end{equation}
and moreover, for every $\varphi\in H_0^1(D)$, the variational identity holds
\begin{align}\label{eq:variational-form}
 \ip{u(t)}{\varphi}_{L^2}
 &=\ip{u_0}{\varphi}_{L^2}
 -\frac12\int_0^t\int_D\nabla u(s,x)\cdot\nabla\varphi(x)\,dx\,ds\notag\\
 &\quad+\int_0^t\ip{b(u(s))}{\varphi}_{L^2}\,ds
 +\int_0^t\ip{u(s)^n}{\varphi}_{L^2}\,dB_s,
\end{align}
with the stochastic integral understood locally.
\end{definition}

Condition \eqref{eq:Linfty-local} and local boundedness of $b$ imply that all terms above are well-defined on finite time intervals. Here is the main result of the paper.

\begin{theorem}[Global existence and uniqueness]\label{thm:main}
Let $n\in N$ be arbitrary, but fixed. Assume $u_0\in C_c(D)$ and Assumption \emph{(B)}. Then equation \eqref{eq:spde} has a unique global solution in the class \eqref{eq:solution-class}--\eqref{eq:variational-form}.
\end{theorem}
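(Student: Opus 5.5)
We argue by truncation, a uniform Lyapunov-type estimate, and removal of the truncation. For each $N\in\mathbb N$ let $\pi_N:\R\to\R$ be the odd Lipschitz cut-off $\pi_N(z)=\max(-N,\min(z,N))$. Set $b_N=b\circ\pi_N$ and $\sigma_N(z)=(\pi_N(z))^n$. Both are globally Lipschitz and vanish at $0$. Standard SPDE theory (variational/monotone methods, or the mild formulation with the Dirichlet heat semigroup) gives a unique global solution $u^N$ of the truncated equation in $C([0,T];L^2)\cap L^2(0,T;H_0^1)$. Since $u_0\in C_c(D)$ is bounded, the usual $L^p$-moment and Kolmogorov arguments on the mild form (the noise is one-dimensional and spatially constant, so the stochastic convolution is very regular) show that $u^N$ has a continuous version with $\sup_{t\le T}\|u^N(t)\|_{L^\infty}<\infty$ a.s. Define the stopping times
\[
\tau_N=\inf\{t\ge0:\ \|u^N(t)\|_{L^\infty}\ge N\}.
\]
Pathwise uniqueness for the truncated equations, together with the fact that $b_N=b_M$ and $\sigma_N=\sigma_M$ on $[-N,N]$ for $M\ge N$, gives $u^N=u^M$ on $[0,\tau_N]$ and $\tau_N\le\tau_M$. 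Setting $u=u^N$ on $[0,\tau_N)$ produces a unique local solution up to $\tau_\infty=\lim_N\tau_N$. The rest of the argument is the non-explosion statement $\tau_\infty=\infty$ a.s.

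\textbf{Main step: non-explosion.} The key point is that the diffusion $u^n\,dB$ is local, so an $L^2$ energy estimate does not close by itself. On one hand, Itô's formula for $\|u\|_{L^2}^2$ produces $+\|u\|_{L^{2n}}^{2n}\,dt$ from the quadratic variation. On the other hand, the drift condition contributes $2\eta\|u\|_{2n}^{2n}$. Both have the wrong sign. The noise helps only through the martingale part, and only after applying a concave (logarithmic) functional.

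We therefore try the Lyapunov functional $V=\log(1+\|u\|_{L^2}^2)$. Writing $Y=\|u\|_2^2$, Itô's formula gives
\[
dV=\frac{-\|\nabla u\|^2+2\langle u,b(u)\rangle+\|u\|_{2n}^{2n}}{1+Y}\,dt-\frac{2\langle u,u^n\rangle^2}{(1+Y)^2}\,dt+dM.
\]
The drift term is bounded by $\dfrac{(1+2\eta)\|u\|_{2n}^{2n}+C}{1+Y}$. It is beaten by the Itô correction only if
\[
2\langle u,u^n\rangle^2\gtrsim(1+Y)\,\|u\|_{2n}^{2n},
\]
which fails in general: Cauchy–Schwarz runs the wrong way, and for odd $n$ the quantity $\langle u,u^n\rangle=\int u^{n+1}$ is not even sign-definite when $n+1$ is odd.

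So the plan instead uses comparison to reduce to a spatially constant problem. The noise coefficient does not depend on $x$. The ODE/SDE
\[
dX=\bar b(X)\,dt+X^n\,dB,
\]
with $\bar b$ a Lipschitz-in-$X$ upper majorant of $b$ satisfying the same growth condition (for instance $\bar b(z)=\sup_{0\le w\le z}b(w)^+ + C$ type adjustments), does not explode. This follows from Feller's test, or from the Lyapunov function $\log(1+x^2)$, because for a scalar the Itô correction is exactly
\[
-\frac{2x^{2n+2}}{(1+x^2)^2}\approx -2x^{2n-2},
\]
which dominates
\[
\frac{(1+2\eta)x^{2n}}{1+x^2}\approx(1+2\eta)x^{2n-2}
\]
precisely when $1+2\eta<2$, i.e.\ $\eta<\tfrac12$. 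This is where the threshold in Assumption (B) enters.

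The concrete steps are then as follows.
\begin{enumerate}[label=(\roman*)]
\item Construct scalar solutions $\overline X$ started from $\sup u_0$ and $\underline X$ started from $\inf u_0$, with drifts $\bar b\ge b\ge\underline b$ chosen admissible.
\item Prove a comparison principle for the truncated SPDE with Dirichlet data: $u^N\le \overline X_t$ as long as $\overline X\ge0$ (the boundary value $0$ lies below), and symmetrically from below. This is done by applying Itô's formula to $\|(u^N-\overline X)^+\|_2^2$, noting that $\overline X$ is a supersolution of the heat part on $D$ with nonnegative boundary values. The diffusion difference $\sigma_N(u)-\sigma_N(\overline X)$ is Lipschitz, so its quadratic variation is controlled by $\|(u-\overline X)^+\|_2^2$, and Gronwall's inequality closes the argument.
\end{enumerate}
Hence $\|u(t)\|_\infty\le|\overline X_t|+|\underline X_t|$, which stays finite on $[0,T]$, and so $\tau_\infty=\infty$.

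\textbf{Main obstacle.} The delicate part is the comparison step. The process $\overline X$ may change sign, and $z^n$ is not monotone for even $n$, so we must handle the regimes $\overline X>0$ and $\overline X\le 0$ and stop at the first time the scalar comparison functions hit $0$. Below zero the upper bound by $0$ is supplied by the Dirichlet boundary and by applying the maximum principle directly to the positive part of $u$.

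If this sign issue proves unworkable, we would instead compare $u^2$ with $X^2$. Alternatively, we would use the Doss–Sussmann transform, which solves $dY=Y^n\,dB$ explicitly and reduces the problem to a random PDE, and apply the deterministic maximum principle pathwise.

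Finally, uniqueness in the class \eqref{eq:solution-class}--\eqref{eq:variational-form} follows by localization. Any two solutions with locally bounded $L^\infty$ norms coincide up to the exit times from $[-N,N]$, by the Lipschitz uniqueness of the truncated equation, and these exit times tend to $\infty$.
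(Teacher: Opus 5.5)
Your proposal is correct and follows essentially the paper's route. The steps are the same: truncate to globally Lipschitz coefficients, show the scalar barrier SDE does not explode via the Lyapunov function $\log(1+x^2)$ (which is exactly where $\eta<\frac12$ enters), compare the truncated SPDE with the barriers through an It\^o formula for $\|(u^N-\overline X)^+\|_2^2$ using only Lipschitz bounds and Gronwall, and localize for both existence and uniqueness.

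The sign difficulty you flag does not arise. Since $b(0)=0$, the point $0$ is absorbing for the scalar SDE. So with $\bar b=\underline b=b$ (no majorant is needed), the barriers started at $\sup u_0\ge0$ and $\inf u_0\le0$ never change sign.

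The paper also localizes via the exit times of the barriers from $[-N,N]$ rather than via $\|u^N\|_{L^\infty}$. This ensures $\sigma_N(\overline X)=\overline X^{\,n}$ in the comparison, which your sketch uses implicitly. It also removes the need for your a priori Kolmogorov continuity step, since the $L^\infty$ bound on $u^N$ comes from the comparison itself.
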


\vskip 0.4cm
The rest of the paper is organized as follows. In Section 2, we establish the well-posedness of a class of stochastic differential equations with superlinear coefficients. In Section 3, we 
consider truncated stochastic heat equations, stochastic differential equations and compare their solutions. Section 4 is devoted to the proof of the main result.  

\section{SDEs barriers}
In this section, for every $x\in\R$, we consider the stochastic differential equation (SDE):
\begin{equation}\label{eq:scalar}
 dX_t=b(X_t)\,dt+X_t^n\,dB_t,
 \qquad X_0=x.
\end{equation}
We have the following result.
\begin{proposition}\label{lem:scalar}
Assume the Assumption (B). The stochastic differential equation \eqref{eq:scalar}
has a unique global strong solution. If $x\ge0$, then $X_t\ge0$ for all $t\ge0$ almost surely; if $x\le0$, then $X_t\le0$ for all $t\ge0$ almost surely.
\end{proposition}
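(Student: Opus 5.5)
Since $b$ and $z\mapsto z^n$ are locally Lipschitz, the standard localization argument gives a unique strong solution of \eqref{eq:scalar} up to the explosion time $\zeta=\lim_{R\to\infty}\tau_R$, where $\tau_R=\inf\{t\ge0:|X_t|\ge R\}$. Pathwise uniqueness holds up to $\zeta$ by the usual Gronwall argument on $[0,\tau_R]$. It therefore remains to show that $\zeta=\infty$ almost surely and to prove the sign preservation.

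For non-explosion I would apply a Lyapunov function argument with $V(z)=\log(1+z^2)$. Itô's formula gives
\[
dV(X_t)=\Big[\frac{2X_tb(X_t)}{1+X_t^2}+\frac{(1-X_t^2)X_t^{2n}}{(1+X_t^2)^2}\Big]dt+\frac{2X_t^{n+1}}{1+X_t^2}\,dB_t .
\]
Set $z=X_t$. By Assumption (B), the drift is bounded by
\[
2C_b+\frac{2\eta z^{2n}(1+z^2)+(1-z^2)z^{2n}}{(1+z^2)^2}
=2C_b+\frac{z^{2n}\big[(2\eta-1)z^2+(2\eta+1)\big]}{(1+z^2)^2}.
\]
Since $\eta<\frac12$, the bracket is negative for $z^2>\frac{1+2\eta}{1-2\eta}$, and on the complementary bounded set the whole expression is bounded. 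Hence $\mathcal L V\le K$ for some constant $K$. This is the main point: the noise coefficient $z^n$ contributes the negative term $-z^{2n+2}/(1+z^2)^2$, which dominates the superlinear outward drift $\eta z^{2n}$ precisely because $\eta<\frac12$.

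Stopping at $\tau_R\wedge t$ kills the stochastic integral, so
\[
\Ee V(X_{t\wedge\tau_R})\le V(x)+Kt .
\]
On the event $\{\tau_R\le t\}$ we have $V(X_{\tau_R})=\log(1+R^2)$, and therefore
\[
\Pp(\tau_R\le t)\le \frac{V(x)+Kt}{\log(1+R^2)}\longrightarrow 0
\]
as $R\to\infty$. Thus $\zeta>t$ almost surely for every $t$, so $\zeta=\infty$ almost surely. I expect this drift computation to be the only genuine obstacle. Choosing the logarithm, rather than $z^2$, is what makes the martingale part's quadratic variation produce a good-sign term: with $V(z)=z^2$ the Itô correction $+z^{2n}$ would have the wrong sign.

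For sign preservation, note that $b(0)=0$ and $0^n=0$, so $X\equiv0$ is the solution started at $0$. By pathwise uniqueness, a solution that hits $0$ stays at $0$ afterwards; this follows from the strong Markov property applied at the hitting time $\sigma=\inf\{t:X_t=0\}$. If $x>0$, continuity of paths means $X$ cannot become negative without first hitting $0$, so $X_t\ge0$ for all $t$. The case $x<0$ is symmetric.

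Alternatively, one could write, for the localized Lipschitz coefficients, $b(z)=z\,\beta(z)$ and $z^n=z\cdot z^{n-1}$ with $\beta$ locally bounded. This gives a linear SDE on $[0,\tau_R]$ with the explicit representation
\[
X_t=x\exp\Big(\int_0^t\big(\beta(X_s)-\tfrac12X_s^{2n-2}\big)ds+\int_0^tX_s^{n-1}dB_s\Big),
\]
from which $\mathrm{sign}(X_t)=\mathrm{sign}(x)$ is immediate. I would use whichever is cleaner. However, $\beta(z)=b(z)/z$ requires some care near $0$, since $b$ is only locally Lipschitz, so the uniqueness and strong Markov argument is my primary plan.
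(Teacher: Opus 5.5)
Your proposal is correct and follows the paper's proof essentially step for step. It uses the same Lyapunov function $V(z)=\log(1+z^2)$, the same bound $\mathcal L V(z)\le 2C_b+z^{2n}\bigl[(1+2\eta)-(1-2\eta)z^2\bigr]/(1+z^2)^2$ made uniform by $\eta<\frac12$, the same exit-probability estimate as $R\to\infty$, and the same hitting-time/pathwise-uniqueness argument for sign preservation. Your caution about the exponential alternative is unnecessary, since $b(0)=0$ and local Lipschitz continuity give $|b(z)/z|\le L_R$ for $0<|z|\le R$.
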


\begin{proof}
Since both $b$ and $z\mapsto z^n$ are locally Lipschitz, there is a unique maximal strong solution up to its explosion time. We show that the explosion time is infinite.

Set
\[
 V(z):=\log(1+z^2).
\]
Then
\[
 V'(z)=\frac{2z}{1+z^2},
 \qquad
 V''(z)=\frac{2(1-z^2)}{(1+z^2)^2}.
\]
The generator of \eqref{eq:scalar} is therefore
\begin{align}\label{eq:LV-exact}
 \mathcal LV(z)
 &=\frac{2z b(z)}{1+z^2}
 +\frac{z^{2n}(1-z^2)}{(1+z^2)^2}.
\end{align}
By \eqref{eq:b-growth},
\begin{align}
 \mathcal LV(z)
 &\le 2C_b
 +\frac{2\eta |z|^{2n}}{1+z^2}
 +\frac{|z|^{2n}(1-z^2)}{(1+z^2)^2}\notag\\
 &=2C_b+
 \frac{|z|^{2n}}{(1+z^2)^2}
 \left[(1+2\eta)-(1-2\eta)z^2\right].
 \label{eq:LV-growth}
\end{align}
Because $1-2\eta>0$, the second term on the right-hand side of \eqref{eq:LV-growth} is bounded above on $\R$. Indeed, it is continuous; for $n=1$ it converges to $-(1-2\eta)$ as $|z|\to\infty$, while for $n\ge2$ it tends to $-\infty$. Hence there exists a finite constant $C_V$ such that
\begin{equation}\label{eq:LV-bound}
 \mathcal LV(z)\le C_V,
 \qquad z\in\R.
\end{equation}

For $R>|x|$, define
\[
 \rho_R^x:=\inf\{t\ge0:|X_t|\ge R\}.
\]
It\^o's formula and \eqref{eq:LV-bound} give
\[
 \Ee V(X_{t\wedge\rho_R^x})
 =V(x)+\Ee\int_0^{t\wedge\rho_R^x}\mathcal LV(X_s)\,ds
 \le V(x)+C_V t.
\]
On $\{\rho_R^x\le t\}$, continuity gives $|X_{t\wedge\rho_R^x}|=R$, and therefore
\begin{equation}\label{eq:scalar-exit-prob}
 \Pp(\rho_R^x\le t)\log(1+R^2)
 \le \log(1+x^2)+C_V t.
\end{equation}
Letting $R\to\infty$ shows that the maximal lifetime is infinite almost surely.

Finally, $b(0)=0$ and $0^n=0$, so the identically zero process is a solution of \eqref{eq:scalar}. By pathwise uniqueness, if a solution hits $0$, its continuation from that hitting time is identically zero. Since sample paths are continuous, a solution cannot cross $0$ without first hitting $0$. This proves preservation of sign.
\end{proof}

In particular,  the stochastic differential equations
\begin{equation}\label{eq:scalar-barriers}
 dX_t^\pm=b(X_t^\pm)\,dt+(X_t^\pm)^n\,dB_t,
 \qquad
 X_0^+=M,\qquad X_0^-=-M,
\end{equation}
where $ M:=\norm{u_0}_{L^\infty(D)}$, admit global solutions that satisfy
\begin{equation}\label{eq:scalar-sign}
 X_t^-\le0\le X_t^+,
 \qquad t\ge0,
 \quad\text{a.s.}
\end{equation}

\begin{remark}[The intrinsic Lyapunov condition]\label{rem:intrinsic}
The proof of Lemma \ref{lem:scalar} only needs
\begin{equation}\label{eq:intrinsic-lyap}
 \sup_{z\in\R}
 \left\{
 \frac{2z b(z)}{1+z^2}
 +\frac{z^{2n}(1-z^2)}{(1+z^2)^2}
 \right\}<\infty.
\end{equation}
\eqref{eq:b-growth} is a transparent sufficient growth condition rather than the weakest possible hypothesis.
\end{remark}

\section{Globally Lipschitz truncations}
In this section, we consider truncated stochastic heat equations and compare the solutions of the truncated equations with the solutions
of stochastic differential equations.
\subsection{Truncation equations}
For $N>M+1$, let
\[
 \pi_N(r):=(-N)\vee(r\wedge N),
\]
and define
\begin{equation}\label{eq:truncated-coefficients}
 b_N(r):=b(\pi_N(r)),
 \qquad
 \sigma_N(r):=\pi_N(r)^n.
\end{equation}
Because $b$ and $r\mapsto r^n$ are Lipschitz on $[-N,N]$, both $b_N$ and $\sigma_N$ are globally Lipschitz. Moreover,
\begin{equation}\label{eq:trunc-equals}
 b_N(r)=b(r),\qquad \sigma_N(r)=r^n,
 \qquad |r|\le N,
\end{equation}
and, by $b(0)=0$,
\[
 b_N(0)=\sigma_N(0)=0.
\]

Consider the truncated SPDE
\begin{equation}\label{eq:truncated-spde}
\begin{cases}
\displaystyle
 du_N(t,x)=\frac12\Delta u_N(t,x)\,dt
 +b_N(u_N(t,x))\,dt
 +\sigma_N(u_N(t,x))\,dB_t,
 &t>0,\ x\in D,\\
 u_N(t,x)=0,&x\in\partial D,\\
 u_N(0,x)=u_0(x).&
\end{cases}
\end{equation}

\begin{lemma}[Global well-posedness of the truncated equation]\label{lem:truncated-wp}
For every $N>M+1$, equation \eqref{eq:truncated-spde} has a unique global adapted solution satisfying, for each $T<\infty$,
\begin{equation}\label{eq:trunc-reg}
 u_N\in C([0,T];L^2(D))\cap L^2(0,T;H_0^1(D))
 \quad\text{a.s.}
\end{equation}
Moreover, the usual finite-moment energy estimates hold on every bounded time interval.
\end{lemma}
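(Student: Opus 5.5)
The plan is to treat \eqref{eq:truncated-spde} as a semilinear stochastic evolution equation on $H:=L^2(D)$, driven by a one-dimensional Brownian motion, and apply the variational (Krylov--Rozovskii / Pardoux) theory with Gelfand triple $V:=H_0^1(D)\subset H\subset V^*=H^{-1}(D)$. The operators are
\[
A(v):=\tfrac12\Delta v+b_N(v)\in V^*,\qquad \Sigma(v):=\sigma_N(v)\in H,
\]
where $b_N(v)$ and $\sigma_N(v)$ denote the Nemytskii maps $x\mapsto b_N(v(x))$ and $x\mapsto\sigma_N(v(x))$. Since $b_N$ and $\sigma_N$ are globally Lipschitz with $b_N(0)=\sigma_N(0)=0$, these Nemytskii maps are globally Lipschitz on $H$ and have linear growth there. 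Writing $L_N$ for the larger of the two Lipschitz constants, we have $\norm{b_N(v)-b_N(w)}_{L^2}\le L_N\norm{v-w}_{L^2}$ and $\norm{b_N(v)}_{L^2}\le L_N\norm{v}_{L^2}$, and the same bounds for $\sigma_N$.

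I will verify the four standard hypotheses in turn.
\begin{enumerate}[label=(\roman*)]
\item Hemicontinuity. The map $\lambda\mapsto {}_{V^*}\langle A(v_1+\lambda v_2),w\rangle_V$ is continuous, because the Laplacian is linear and $b_N$ is Lipschitz.
\item Weak monotonicity. For $v,w\in V$,
\[
2\,{}_{V^*}\langle A(v)-A(w),v-w\rangle_V+\norm{\Sigma(v)-\Sigma(w)}_{L^2}^2\le -\norm{\nabla(v-w)}_{L^2}^2+(2L_N+L_N^2)\norm{v-w}_{L^2}^2 .
\]
\item Coercivity. $2\,{}_{V^*}\langle A(v),v\rangle_V+\norm{\Sigma(v)}_{L^2}^2\le-\norm{\nabla v}_{L^2}^2+C_N\norm{v}_{L^2}^2$.
\item Boundedness. $\norm{A(v)}_{V^*}\le C(\norm{v}_V+\norm{v}_{L^2})$.
\end{enumerate}
All four are exponent-$2$ conditions, so they hold with constants depending on $N$. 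The abstract theorem then yields a unique adapted solution $u_N\in C([0,T];L^2)\cap L^2(0,T;H_0^1)$ a.s., whose weak form matches \eqref{eq:variational-form} with $b,u^n$ replaced by $b_N,\sigma_N$. Since the initial datum $u_0\in C_c(D)\subset L^2$ is deterministic, it has moments of every order. Uniqueness holds pathwise: apply It\^o's formula to $\norm{u_N-\tilde u_N}_{L^2}^2$, use (ii), and conclude with Gronwall. Global existence needs no extra work, because the conditions hold uniformly on every $[0,T]$ and $T$ is arbitrary.

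For the moment estimates, I apply It\^o's formula to $\norm{u_N(t)}_{L^2}^{p}$ for $p\ge2$ and use the coercivity bound (iii) together with $\norm{\sigma_N(v)}_{L^2}\le L_N\norm{v}_{L^2}$. The resulting martingale term is handled by the Burkholder--Davis--Gundy inequality, absorbing half of $\Ee\sup_t\norm{u_N}_{L^2}^p$ into the left-hand side, and Gronwall then gives
\[
\Ee\sup_{t\le T}\norm{u_N(t)}_{L^2}^p+\Ee\Big(\int_0^T\norm{\nabla u_N}_{L^2}^2ds\Big)^{p/2}\le C(N,T,p)\norm{u_0}_{L^2}^p .
\]
To pass from the stopped to the unstopped estimate, I localize with stopping times $\tau_R=\inf\{t:\norm{u_N(t)}_{L^2}\ge R\}$ and let $R\to\infty$ by Fatou.

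I do not expect a genuine obstacle, since everything reduces to standard theory. The only points that need care are checking that the Nemytskii map of $b_N$ is well defined from $V$ into $H\subset V^*$, which is immediate from the global Lipschitz bound, and recording that every constant depends on $N$ through $L_N$. These estimates are not uniform in $N$, and uniformity must come later from the comparison with the SDE barriers $X^\pm$.
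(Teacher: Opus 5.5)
Your proof is correct, but it follows a different route from the paper's.

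\textbf{The paper's route.} It works in the mild (semigroup) framework. It takes $S(t)$, the analytic contraction semigroup generated by $\frac12\Delta$ with Dirichlet condition on $L^2(D)$. It then obtains the unique global solution of $u_N(t)=S(t)u_0+\int_0^tS(t-s)F_N(u_N(s))\,ds+\int_0^tS(t-s)G_N(u_N(s))\,dB_s$ by a standard fixed-point argument. This uses only that the Nemytskii maps are globally Lipschitz on $H$. For the energy estimates it applies the variational It\^o formula to $\norm{u_N}_2^2$, together with $|\ip{v}{F_N(v)}|+\norm{G_N(v)}_2^2\le C_N\norm{v}_2^2$, BDG and Gronwall.

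\textbf{Your route.} You place the equation directly in the Krylov--Rozovskii monotone framework on $H_0^1\subset L^2\subset H^{-1}$. You check hemicontinuity, weak monotonicity, coercivity and boundedness, and your verifications are right.

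\textbf{What each buys.} In your approach, the regularity $C([0,T];L^2)\cap L^2(0,T;H_0^1)$, the weak formulation, and the It\^o formula for $\norm{u_N}_2^2$ all come out of a single theorem. The paper instead tacitly relies on a standard but unstated fact: the mild solution must be identified with a variational solution lying in $L^2(0,T;H_0^1)$ before the variational It\^o formula can be applied. That variational structure is exactly what is used later, in Proposition \ref{prop:positive-ito} and in the pathwise-uniqueness steps of Lemma \ref{lem:consistency} and Step 3, so your route supports those steps more directly. The mild route, on the other hand, supplies the semigroup representation. That representation is what one would use for heat-kernel arguments such as the spatial-continuity remark at the end of the paper.

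\textbf{A small correction to your moment bound.} It\^o's formula for $\norm{u_N}_2^p$ controls $\Ee\int_0^T\norm{u_N}_2^{p-2}\norm{\nabla u_N}_2^2\,ds$. It does not directly control $\Ee\bigl(\int_0^T\norm{\nabla u_N}_2^2\,ds\bigr)^{p/2}$. To get the latter, raise the $p=2$ energy identity to the power $p/2$ and apply BDG to its martingale term. That martingale term is bounded by $C_N T^{p/4}\,\Ee\sup_{t\le T}\norm{u_N(t)}_2^p$, which you have already controlled.
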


\begin{proof}
Let $H=L^2(D)$ and let $A=\frac12\Delta$ with homogeneous Dirichlet boundary condition. Then $A$ generates a contraction analytic semigroup $S(t)$ on $H$. Define the Nemytskii maps
\[
 F_N(v)(x):=b_N(v(x)),
 \qquad
 G_N(v)(x):=\sigma_N(v(x)).
\]
Since $b_N(0)=\sigma_N(0)=0$ and both scalar functions are globally Lipschitz, there are constants $L_{b,N}$ and $L_{\sigma,N}$ such that
\[
 \norm{F_N(v)-F_N(w)}_H\le L_{b,N}\norm{v-w}_H,
 \qquad
 \norm{G_N(v)-G_N(w)}_H\le L_{\sigma,N}\norm{v-w}_H.
\]
Thus the mild equation
\begin{equation}\label{eq:mild-trunc}
 u_N(t)=S(t)u_0
 +\int_0^tS(t-s)F_N(u_N(s))\,ds
 +\int_0^tS(t-s)G_N(u_N(s))\,dB_s
\end{equation}
has a unique global $H$-valued adapted solution by the standard fixed-point arguments,  see, for example, \cite{DPZ2014,PrevotRockner2007}.

By the variational Ito formula (\cite{PrevotRockner2007}), the energy identity is
\begin{align*}
 \norm{u_N(t)}_2^2
 &+\int_0^t\norm{\nabla u_N(s)}_2^2\,ds\\
 &=\norm{u_0}_2^2
 +2\int_0^t\ip{u_N(s)}{F_N(u_N(s))}_{L^2}\,ds
 +\int_0^t\norm{G_N(u_N(s))}_2^2\,ds\\
 &\quad+2\int_0^t\ip{u_N(s)}{G_N(u_N(s))}_{L^2}\,dB_s.
\end{align*}
The global Lipschitz bounds and the vanishing of $F_N,G_N$ at zero give
\[
 |\ip{v}{F_N(v)}|+\norm{G_N(v)}_2^2
 \le C_N\norm{v}_2^2.
\]
The standard BDG and Gronwall estimates therefore yield the claimed finite-horizon estimates and \eqref{eq:trunc-reg}.
\end{proof}

On the same probability space, let
\begin{equation}\label{eq:truncated-scalar}
 dY_t^{N,\pm}
 =b_N(Y_t^{N,\pm})\,dt
 +\sigma_N(Y_t^{N,\pm})\,dB_t,
 \qquad
 Y_0^{N,+}=M,\quad Y_0^{N,-}=-M.
\end{equation}
Since $b_N$ and $\sigma_N$ are globally Lipschitz, these equations have unique global strong solutions. Since both coefficients vanish at zero, the same hitting-time argument as in Lemma \ref{lem:scalar} gives
\begin{equation}\label{eq:YN-sign}
 Y_t^{N,-}\le0\le Y_t^{N,+},
 \qquad t\ge0,
 \quad\text{a.s.}
\end{equation}

\subsection{A It\^o formula}

Choose a nondecreasing $\vartheta\in C^\infty(\R)$ such that
\begin{equation}\label{eq:theta}
 0\le\vartheta\le1,
 \qquad
 \vartheta(r)=0\ \text{for }r\le0,
 \qquad
 \vartheta(r)=1\ \text{for }r\ge1.
\end{equation}
For $\varepsilon>0$, define $\phi_\varepsilon\in C^2(\R)$ by
\begin{equation}\label{eq:phi-eps}
 \phi_\varepsilon''(r)=\vartheta(r/\varepsilon),
 \qquad
 \phi_\varepsilon'(r)=\int_0^r\phi_\varepsilon''(q)\,dq,
 \qquad
 \phi_\varepsilon(r)=\int_0^r\phi_\varepsilon'(q)\,dq.
\end{equation}
Then
\begin{equation}\label{eq:phi-properties}
 \phi_\varepsilon(r)=\phi_\varepsilon'(r)=0\quad(r\le0),
 \qquad
 0\le\phi_\varepsilon''\le1,
 \qquad
 0\le\phi_\varepsilon'(r)\le r^+,
 \qquad
 0\le\phi_\varepsilon(r)\le\frac12(r^+)^2,
\end{equation}
and, as $\varepsilon\downarrow0$,
\begin{equation}\label{eq:phi-limits}
 \phi_\varepsilon(r)\to\frac12(r^+)^2,
 \qquad
 \phi_\varepsilon'(r)\to r^+,
 \qquad
 \phi_\varepsilon''(r)\to\1_{\{r>0\}}.
\end{equation}

\begin{proposition}[Positive-part It\^o identity]\label{prop:positive-ito}
Fix $N$ and write $Y=Y^{N,+}$. Let
\[
 w(t,x):=u_N(t,x)-Y_t.
\]
Then, for every $t\ge0$, almost surely,
\begin{align}
 \frac12\norm{w^+(t)}_{L^2(D)}^2
 &+\frac12\int_0^t\norm{\nabla w^+(s)}_{L^2(D)}^2\,ds\notag\\
 &=\frac12\norm{w^+(0)}_{L^2(D)}^2
 +\int_0^t\int_D w^+(s,x)
 \bigl[b_N(u_N(s,x))-b_N(Y_s)\bigr]dx\,ds\notag\\
 &\quad+\frac12\int_0^t\int_D
 \1_{\{w(s,x)>0\}}
 \left|\sigma_N(u_N(s,x))-\sigma_N(Y_s)\right|^2
 \,dx\,ds\notag\\
 &\quad+\int_0^t
 \left[\int_D w^+(s,x)
 \bigl(\sigma_N(u_N(s,x))-\sigma_N(Y_s)\bigr)\,dx\right]dB_s.
 \label{eq:positive-ito}
\end{align}
The analogous identity holds with $w=Y^{N,-}-u_N$ for the lower comparison.
\end{proposition}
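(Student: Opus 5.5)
The plan is to apply a variational Itô formula to the functional $\Phi_\varepsilon(v):=\int_D\phi_\varepsilon(v(x))\,dx$ evaluated at $v=w(t)$, and then let $\varepsilon\downarrow0$ using \eqref{eq:phi-limits}.

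First I write down the equation for $w$. Since $Y_t$ does not depend on $x$, $\Delta Y_t=0$, and $w$ solves, in the weak sense on $H_0^1$-test functions,
\[
 dw=\tfrac12\Delta u_N\,dt+\bigl[b_N(u_N)-b_N(Y_t)\bigr]dt+\bigl[\sigma_N(u_N)-\sigma_N(Y_t)\bigr]dB_t .
\]
The point needing care is the boundary. Here $w=-Y_t\neq0$ on $\partial D$, so $w\notin H_0^1$. However $\phi_\varepsilon'(w)$ is what gets paired with $\Delta u_N$, and $\phi_\varepsilon'(w)\in H_0^1(D)$: on $\partial D$ we have $w=-Y_t\le0$ by \eqref{eq:YN-sign}, and $\phi_\varepsilon'$ vanishes on $(-\infty,0]$. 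Also $\phi_\varepsilon'$ is Lipschitz with $\phi_\varepsilon'(0)=0$, so $\phi_\varepsilon'(w)$ lies in $H^1$ with zero trace.

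To make this rigorous I will work with $\tilde u:=u_N$, which is in the Gelfand triple $H_0^1\subset L^2\subset H^{-1}$, and with the semimartingale $Y$. I will apply the Itô formula for $\int_D F(u_N(t,x),Y_t)\,dx$ with $F(a,y)=\phi_\varepsilon(a-y)$. This is a Krylov-type Itô formula for a $C^2$ function of the variational solution and a real semimartingale. To avoid invoking a version I cannot cite, I could instead regularize: mollify in space, or take a Galerkin/resolvent approximation $u_N^k=kR(k)u_N$. For these, pointwise Itô applies, and I then pass to the limit using that $\phi_\varepsilon''$ is bounded and continuous.

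The resulting identity is
\begin{align*}
 \Phi_\varepsilon(w(t))&=\Phi_\varepsilon(w(0))-\tfrac12\int_0^t\!\!\int_D\phi_\varepsilon''(w)|\nabla w|^2\,dx\,ds
 +\int_0^t\!\!\int_D\phi_\varepsilon'(w)\bigl[b_N(u_N)-b_N(Y)\bigr]dx\,ds\\
 &\quad+\tfrac12\int_0^t\!\!\int_D\phi_\varepsilon''(w)\bigl|\sigma_N(u_N)-\sigma_N(Y)\bigr|^2dx\,ds
 +\int_0^t\!\!\int_D\phi_\varepsilon'(w)\bigl[\sigma_N(u_N)-\sigma_N(Y)\bigr]dx\,dB_s .
\end{align*}
Here integration by parts is justified because $\phi_\varepsilon'(w)\in H_0^1$ and $\nabla w=\nabla u_N$. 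The cross-variation between $u_N$ and $Y$ enters $\phi_\varepsilon''$ exactly through the squared difference of the diffusion coefficients, since both are driven by the same $B$.

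Finally I let $\varepsilon\downarrow0$, using \eqref{eq:phi-properties} and \eqref{eq:phi-limits}. The left side and the $b_N$ term converge by dominated convergence, with dominating bound $|w|\,L_{b,N}|w|$ and $\int_0^t\|w\|_2^2\,ds<\infty$. The $\sigma_N$ drift term converges to the indicator term by boundedness of $\sigma_N$ and Lipschitzness. The gradient term converges to $\int\1_{\{w>0\}}|\nabla w|^2=\int|\nabla w^+|^2$ by monotone convergence; since $\vartheta$ is nondecreasing, $\phi''_\varepsilon$ increases as $\varepsilon\downarrow0$. The stochastic integral converges in probability by the dominated convergence theorem for stochastic integrals, with integrand dominated by $L_{\sigma,N}\|w\|_2^2$.

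The lower comparison is identical, applied to $Y^{N,-}-u_N$, where on $\partial D$ this equals $Y^{N,-}\le0$. The main obstacle is the Itô formula step itself, namely justifying the Itô formula for the nonlinear functional with non-$H_0^1$ argument. I expect the approximation route to handle it, relying on the sign of $Y$ to keep the test function in $H_0^1$.
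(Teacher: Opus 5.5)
Your proposal is correct and follows essentially the same route as the paper. You apply the It\^o formula to $\int_D\phi_\varepsilon(w)\,dx$, integrate the Laplacian term by parts using $\phi_\varepsilon'(w)\in H_0^1(D)$ (the trace of $w$ is $-Y_t\le0$), and let $\varepsilon\downarrow0$ by dominated/monotone convergence, including for the stochastic integral. The one difference is that you flag that $w\notin H_0^1(D)$, so the Gelfand-triple It\^o formula does not apply verbatim, and you handle it via a two-variable It\^o formula for $\phi_\varepsilon(u_N-Y)$ or a Galerkin/resolvent regularization; the paper simply applies the variational It\^o formula to $w$ directly, so your treatment is the more careful one.
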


\begin{proof}
We give the upper-barrier argument. Since $Y_t\ge0$ and the trace of $u_N(t)$ on the boundary $\partial D$ is zero,
\[
 \operatorname{Tr}w(t)=-Y_t\le0.
\]
Hence $\phi_\varepsilon'(w(t))\in H_0^1(D)$. The Sobolev chain rule gives
\begin{equation}\label{eq:chain}
 \nabla\phi_\varepsilon'(w)
 =\phi_\varepsilon''(w)\nabla w
 =\phi_\varepsilon''(w)\nabla u_N.
\end{equation}
Set
\[
 f_N(t,x):=b_N(u_N(t,x))-b_N(Y_t),
 \qquad
 g_N(t,x):=\sigma_N(u_N(t,x))-\sigma_N(Y_t).
\]
The difference satisfies, in the weak sense,
\[
 dw=\frac12\Delta u_N\,dt+f_N\,dt+g_N\,dB_t.
\]
Apply the variational It\^o formula to the smooth convex functional
\[
 v\longmapsto\int_D\phi_\varepsilon(v(x))\,dx.
\]
This yields
\begin{align}
 \int_D\phi_\varepsilon(w(t,x))\,dx
 &=\int_D\phi_\varepsilon(w(0,x))\,dx
 +\frac12\int_0^t
 \ip{\Delta u_N(s)}{\phi_\varepsilon'(w(s))}_{H^{-1},H_0^1}\,ds\notag\\
 &\quad+\int_0^t\int_D\phi_\varepsilon'(w)f_N\,dx\,ds
 +\frac12\int_0^t\int_D\phi_\varepsilon''(w)|g_N|^2\,dx\,ds\notag\\
 &\quad+\int_0^t
 \left[\int_D\phi_\varepsilon'(w)g_N\,dx\right]dB_s.
 \label{eq:ito-eps}
\end{align}
By \eqref{eq:chain},
\begin{align}\label{eq:laplace-eps}
 \ip{\Delta u_N}{\phi_\varepsilon'(w)}_{H^{-1},H_0^1}
 &=-\int_D\nabla u_N\cdot\nabla\phi_\varepsilon'(w)\,dx\\
 &=-\int_D\phi_\varepsilon''(w)|\nabla u_N|^2\,dx.\notag
\end{align}
We now let $\varepsilon\downarrow0$. The zeroth-order terms converge by dominated convergence, using the global Lipschitz bounds of $b_N$ and $\sigma_N$ and the estimates in \eqref{eq:phi-properties}. For the gradient term,
\[
 \int_0^t\int_D\phi_\varepsilon''(w)|\nabla u_N|^2\,dx\,ds
 \longrightarrow
 \int_0^t\int_D\1_{\{w>0\}}|\nabla u_N|^2\,dx\,ds.
\]
Since
\[
 \nabla w^+=\1_{\{w>0\}}\nabla w
 =\1_{\{w>0\}}\nabla u_N
 \quad\text{a.e.},
\]
the limit is $\int_0^t\norm{\nabla w^+(s)}_2^2ds$. After the usual localization, the stochastic integrals converge by the It\^o isometry from $\phi_\varepsilon'(w)\to w^+$ and $|\phi_\varepsilon'(w)|\le w^+$. This proves \eqref{eq:positive-ito}.

For the lower comparison, put $w=Y^{N,-}-u_N$. Its boundary trace is $Y^{N,-}\le0$. The Laplacian contribution has the same favorable sign, and the reaction and diffusion differences are treated identically.
\end{proof}


\subsection{Comparison for the truncated equations}

\begin{lemma}[Stochastic comparison]\label{lem:comparison}
For every $N>M+1$, there is an event of probability one on which
\begin{equation}\label{eq:truncated-comparison}
 Y_t^{N,-}\le u_N(t,x)\le Y_t^{N,+}
\end{equation}
for every $t\ge0$ and almost every $x\in D$.
\end{lemma}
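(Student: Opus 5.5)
The upper bound comes from the positive-part It\^o identity of Proposition \ref{prop:positive-ito} combined with Gronwall's lemma; the lower bound is symmetric. Set $w=u_N-Y^{N,+}$. Since $\norm{u_0}_{L^\infty}=M=Y_0^{N,+}$, we have $w^+(0)=0$, so the initial term in \eqref{eq:positive-ito} vanishes. The gradient term on the left is nonnegative and can simply be dropped.

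Next I bound the drift and diffusion terms. Let $L_{b,N},L_{\sigma,N}$ be the global Lipschitz constants of $b_N,\sigma_N$. On $\{w>0\}$ we have $|u_N-Y|=w^+$, so
\[
w^+\bigl[b_N(u_N)-b_N(Y)\bigr]\le L_{b,N}(w^+)^2,
\qquad
\1_{\{w>0\}}|\sigma_N(u_N)-\sigma_N(Y)|^2\le L_{\sigma,N}^2(w^+)^2 .
\]
Thus, with $\Phi(t)=\norm{w^+(t)}_2^2$ and $K_N=2L_{b,N}+L_{\sigma,N}^2$,
\[
\Phi(t)\le K_N\int_0^t\Phi(s)\,ds+2\int_0^t\Big[\int_D w^+g_N\,dx\Big]dB_s .
\]

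To take expectations I localize with $\tau_R=\inf\{t:\norm{u_N(t)}_2+|Y_t|\ge R\}$. Up to $\tau_R$ the integrand of the stochastic integral is bounded by $L_{\sigma,N}\norm{w^+}_2^2\le C R^2$, so the stopped stochastic integral is a true martingale with mean zero. This gives $\Ee\Phi(t\wedge\tau_R)\le K_N\int_0^t\Ee\Phi(s\wedge\tau_R)\,ds$, and Gronwall's lemma yields $\Ee\Phi(t\wedge\tau_R)=0$. Since $u_N\in C([0,T];L^2)$ and $Y$ is continuous, $\tau_R\uparrow\infty$ as $R\to\infty$. Fatou's lemma then gives $\Phi(t)=0$ a.s. for each fixed $t$. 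The identical argument with $w=Y^{N,-}-u_N$, using the lower version of Proposition \ref{prop:positive-ito}, gives $(Y^{N,-}-u_N)^+(t)=0$ a.s.

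The main point requiring care is passing from ``for each $t$, a.s.'' to a single null set valid for all $t$. First take the intersection over rational $t$, which gives one event of full probability. On that event, $t\mapsto w^+(t)$ is continuous in $L^2(D)$, because $u_N\in C([0,T];L^2(D))$, $Y$ is continuous, and $v\mapsto (v-c)^+$ is $1$-Lipschitz in $L^2$ jointly in $(v,c)$ (using $|D|<\infty$). So $\norm{w^+(t)}_2=0$ extends from rational $t$ to all $t\ge0$, and for each such $t$ this means $u_N(t,x)\le Y_t^{N,+}$ for a.e. $x$. This gives \eqref{eq:truncated-comparison}.

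One further check: Proposition \ref{prop:positive-ito} is stated pathwise for each $t$. Applying it at the random time $t\wedge\tau_R$ is justified by continuity of both sides in $t$, or equivalently by applying the identity to the stopped process.
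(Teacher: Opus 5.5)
Your proposal is correct and follows essentially the same route as the paper: the positive-part It\^o identity with the gradient term dropped, Lipschitz bounds on $\{w>0\}$, localization of the martingale, Gronwall, and then a countable dense set of times plus $L^2$-continuity to obtain a single null set. The only difference is cosmetic: you stop when $\norm{u_N(t)}_2+|Y_t|$ reaches $R$ and conclude using $\tau_R\uparrow\infty$, whereas the paper stops when $\norm{w^+(t)}_2^2$ reaches $m$ and argues that this level is never reached.
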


\begin{proof}
We prove the upper bound. Set $Y=Y^{N,+}$ and $w=u_N-Y$. Initially,
\[
 w(0,x)=u_0(x)-M\le0,
\]
so $w^+(0)=0$.

Let $L_{b,N}$ and $L_{\sigma,N}$ be global Lipschitz constants of $b_N$ and $\sigma_N$. On $\{w>0\}$,
\begin{align}
 w^+\bigl[b_N(u_N)-b_N(Y)\bigr]
 &\le L_{b,N}(w^+)^2,
 \label{eq:b-lip-positive}\\
 |\sigma_N(u_N)-\sigma_N(Y)|^2
 &\le L_{\sigma,N}^2(w^+)^2.
 \label{eq:sigma-lip-positive}
\end{align}
Applying Proposition \ref{prop:positive-ito}, dropping the nonnegative gradient term, localizing the martingale, and taking expectations gives
\begin{equation}\label{eq:comparison-pregronwall}
 \frac12\Ee\norm{w^+(t\wedge\kappa)}_2^2
 \le
 \left(L_{b,N}+\frac12L_{\sigma,N}^2\right)
 \int_0^t
 \Ee\!\left[\1_{\{s\le\kappa\}}\norm{w^+(s)}_2^2\right]ds
\end{equation}
for every bounded localization time $\kappa$.

For fixed $T$ and $m\ge1$, take
\[
 \kappa_m:=T\wedge\inf\{t\ge0:\norm{w^+(t)}_2^2\ge m\}.
\]
The $L^2$-continuity of $u_N$ and continuity of $Y$ make $\kappa_m$ a stopping time. Before $\kappa_m$, the martingale integrand in \eqref{eq:positive-ito} is square integrable since
\[
 \left|\int_Dw^+\bigl(\sigma_N(u_N)-\sigma_N(Y)\bigr)dx\right|
 \le L_{\sigma,N}\norm{w^+}_2^2.
\]
Set
\[
 F_m(t):=\Ee\norm{w^+(t\wedge\kappa_m)}_2^2.
\]
Then \eqref{eq:comparison-pregronwall} and Gronwall's lemma give $F_m(t)=0$ on $[0,T]$. Hence the level $m$ is never reached and
\[
 u_N(t,x)\le Y_t^{N,+}
\]
for every $t\in[0,T]$ and almost every $x$, on one event of probability one. The passage from fixed times to all times uses a countable dense set and $L^2$-continuity, exactly as in the drift-free case.

For the lower bound, set $w=Y^{N,-}-u_N$. Then $w(0)\le0$, its boundary trace is $Y^{N,-}\le0$, and on $\{w>0\}$ the same Lipschitz estimates hold with $Y^{N,-}$ and $u_N$ interchanged. Thus $w^+=0$. Taking a countable intersection over integer time horizons proves \eqref{eq:truncated-comparison} simultaneously for all $t\ge0$.
\end{proof}

\section{Global solutions}

Let $X^\pm$ be the global solutions of \eqref{eq:scalar-barriers}, and define
\begin{equation}\label{eq:rhoN}
 \rho_N:=\inf\left\{t\ge0:
 |X_t^+|\vee|X_t^-|\ge N\right\}.
\end{equation}
By continuity and global non-explosion of the scalar solutions,
\begin{equation}\label{eq:rhoN-infty}
 \rho_N\uparrow\infty
 \qquad\text{almost surely.}
\end{equation}
Because $b_N=b$ and $\sigma_N(r)=r^n$ on $[-N,N]$, scalar pathwise uniqueness gives
\begin{equation}\label{eq:Y-X}
 Y_t^{N,\pm}=X_t^\pm,
 \qquad 0\le t<\rho_N.
\end{equation}
Combining this with Lemma \ref{lem:comparison},
\begin{equation}\label{eq:uN-barrier}
 X_t^-\le u_N(t,x)\le X_t^+,
 \qquad 0\le t<\rho_N,
\end{equation}
for almost every $x$. Since \eqref{eq:scalar-sign} holds and $|X_t^\pm|<N$ for $t<\rho_N$,
\begin{equation}\label{eq:uN-under-N}
 |u_N(t,x)|<N,
 \qquad 0\le t<\rho_N.
\end{equation}
Hence both truncations are inactive:
\[
 b_N(u_N)=b(u_N),
 \qquad
 \sigma_N(u_N)=u_N^n,
 \qquad t<\rho_N.
\]
Thus $u_N$ solves the original equation \eqref{eq:spde} up to $\rho_N$.

\begin{lemma}[Consistency of truncations]\label{lem:consistency}
If $K>N>M+1$, then
\begin{equation}\label{eq:consistency}
 u_K(t)=u_N(t),
 \qquad 0\le t<\rho_N,
\end{equation}
almost surely in $L^2(D)$.
\end{lemma}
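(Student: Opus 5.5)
The plan is a pathwise uniqueness argument for the truncated equation with coefficients $b_K,\sigma_K$, run on the random interval $[0,\rho_N)$. On that interval both $u_N$ and $u_K$ solve this same globally Lipschitz equation.

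\emph{Step 1: both processes stay below level $N$.} By \eqref{eq:uN-under-N} we have $|u_N(t,x)|<N$ for $t<\rho_N$. Applying the same chain of reasoning at level $K$ gives $X_t^-\le u_K(t,x)\le X_t^+$ for $t<\rho_K$, using Lemma \ref{lem:comparison} with $K$ and \eqref{eq:Y-X} with $K$. Since $|X^\pm|<N$ before $\rho_N$, it follows that $\rho_N\le\rho_K$ and $|u_K(t,x)|<N$ for $t<\rho_N$. On $[-N,N]$ we have $b_K=b_N=b$ and $\sigma_K=\sigma_N=r^n$. Hence, before $\rho_N$, the drift and diffusion coefficients evaluated along $u_K$ and along $u_N$ all coincide with the untruncated ones, i.e.
\[
 b_K(u_K)=b_N(u_K),\qquad \sigma_K(u_K)=\sigma_N(u_K),\qquad b_N(u_N)=b_K(u_N),\qquad \sigma_N(u_N)=\sigma_K(u_N),
\]
and the same holds with $b$ and $r^n$ in place of the truncations.

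\emph{Step 2: energy identity for the difference.} Set $v:=u_K-u_N$, so that $v(0)=0$. Apply the variational It\^o formula for $\norm{v}_2^2$, as in Lemma \ref{lem:truncated-wp}, stopped at
\[
 \tau_m:=\rho_N\wedge T\wedge\inf\{t:\norm{v(t)}_2^2\ge m\}.
\]
Before $\tau_m$ the drift difference equals $b_K(u_K)-b_K(u_N)$ and the noise difference equals $\sigma_K(u_K)-\sigma_K(u_N)$, by Step 1. The global Lipschitz constants $L_{b,K}$ and $L_{\sigma,K}$ then bound these terms by $C_K\norm{v}_2^2$. Drop the gradient term, note that the stopped stochastic integral is a true martingale, and take expectations. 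This gives
\[
 \Ee\norm{v(t\wedge\tau_m)}_2^2\le C_K\int_0^t\Ee\bigl[\1_{\{s\le\tau_m\}}\norm{v(s)}_2^2\bigr]\,ds ,
\]
and Gronwall's lemma yields $\Ee\norm{v(t\wedge\tau_m)}_2^2=0$.

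\emph{Step 3: conclusion.} Letting $m\to\infty$ and then $T\to\infty$ along integers gives $v(t\wedge\rho_N)=0$ for each fixed $t$ almost surely. By $L^2$-continuity and a countable dense set of times, this holds simultaneously for all $t<\rho_N$ on one null-complement event.

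The main obstacle is Step 1. Lemma \ref{lem:comparison} holds only for a.e.\ $x$, and the barrier identification \eqref{eq:Y-X} must be carried out at level $K$ and then restricted to $t<\rho_N$. I would handle this by checking $\rho_N\le\rho_K$ directly from the definition \eqref{eq:rhoN}. I would also note that the It\^o formula only integrates in $x$, so a.e.-in-$x$ bounds are enough. An alternative that avoids bounding $u_K$ altogether is to use $\sigma_N$ and $b_N$ as the common coefficients and to observe that $b_K(u_K)$ and $b_N(u_K)$ agree wherever $|u_K|\le N$. That route, however, still needs the level-$K$ comparison, so I would keep the direct approach.
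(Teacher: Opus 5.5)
Your proposal is correct and is essentially the paper's argument: confine both $u_N$ and $u_K$ below level $N$ on $[0,\rho_N)$ via Lemma~\ref{lem:comparison} and \eqref{eq:Y-X}, then apply pathwise uniqueness, stopped at $\rho_N$, for a common globally Lipschitz equation; you spell that step out via the energy identity and Gronwall where the paper just cites it, and you use $b_K,\sigma_K$ where the paper uses $b_N,\sigma_N$. Your closing remark has the two routes backwards: with your choice of $b_K,\sigma_K$ you only need $b_N(u_N)=b_K(u_N)$ and $\sigma_N(u_N)=\sigma_K(u_N)$, which follow from \eqref{eq:uN-under-N} alone, so it is your route that makes the level-$K$ bound on $u_K$ unnecessary, and the paper's $b_N,\sigma_N$ route is the one that requires it.
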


\begin{proof}
Since $\rho_N\le\rho_K$, scalar pathwise uniqueness gives
\[
 Y_t^{K,\pm}=X_t^\pm,
 \qquad t<\rho_N.
\]
The comparison lemma applied to $u_K$ then gives
\[
 X_t^-\le u_K(t,x)\le X_t^+,
 \qquad t<\rho_N,
\]
so $|u_K|<N$ on this interval. Consequently,
\[
 b_K(u_K)=b(u_K)=b_N(u_K),
 \qquad
 \sigma_K(u_K)=u_K^n=\sigma_N(u_K),
 \qquad t<\rho_N.
\]
The same identities hold for $u_N$. Hence, up to $\rho_N$, both processes solve the same globally Lipschitz truncated SPDE with coefficients $b_N,\sigma_N$, the same Brownian motion, and the same initial condition. Pathwise uniqueness for that equation yields \eqref{eq:consistency}.
\end{proof}


\vskip 0.4cm
\noindent {\textbf{Proof of Theorem \ref{thm:main}}}
\vskip 0.3cm
\begin{proof}
\textbf{Step 1: construction by pasting.}
Work on the probability-one event on which \eqref{eq:rhoN-infty} and Lemma \ref{lem:consistency} hold for all integers $N>M+1$. For $t\ge0$, choose any $N$ such that $t<\rho_N$ and define
\begin{equation}\label{eq:pasting}
 u(t):=u_N(t).
\end{equation}
Such an $N$ exists by \eqref{eq:rhoN-infty}, and the definition is independent of the choice of $N$ by consistency.

An explicit adapted version can be obtained by fixing an increasing sequence $N_j\to\infty$ and writing, for each fixed $t$,
\[
 u(t)=\sum_{j\ge1}
 \1_{\{\rho_{N_{j-1}}\le t<\rho_{N_j}\}}u_{N_j}(t),
\]
with the obvious convention for the first term. Since $\rho_N$ are stopping times, the indicators are $\mathcal F_t$-measurable.

For each fixed $T<\infty$ and almost every sample path, choose $N=N(\omega,T)$ so large that $T<\rho_N(\omega)$. Then
\[
 u(t,\omega)=u_N(t,\omega),
 \qquad 0\le t\le T.
\]
Hence $u$ inherits
\[
 u\in C([0,T];L^2(D))\cap L^2(0,T;H_0^1(D)).
\]
On $[0,\rho_N)$ the truncated coefficients equal the original coefficients along $u_N$, so the stopped variational identities paste together to give \eqref{eq:variational-form} on every finite interval.

\medskip
\textbf{Step 2: the barrier bound and non-explosion.}
For $t<\rho_N$, \eqref{eq:uN-barrier} gives
\begin{equation}\label{4.0}
 X_t^-\le u(t,x)\le X_t^+
\end{equation}
for almost every $x$. Since $\rho_N\uparrow\infty$, the above inequality holds for all finite times. For every $T<\infty$,
\[
 C_T(\omega):=
 \max\left\{
 \sup_{0\le s\le T}X_s^+(\omega),
 \sup_{0\le s\le T}(-X_s^-(\omega))
 \right\}<\infty
\]
because $X^\pm$ have continuous global paths. Thus
\[
 \sup_{0\le t\le T}\norm{u(t)}_{L^\infty(D)}\le C_T
\]
almost surely, proving the pathwise non-explosion of the solution.

\medskip
\textbf{Step 3: uniqueness in the stated solution class.}
Let $u$ and $v$ be two global solutions in the class \eqref{eq:solution-class}--\eqref{eq:variational-form}, driven by the same Brownian motion and with the same initial condition. Define
\begin{equation}\label{eq:uniq-stop}
 \tau_R:=\inf\left\{t\ge0:
 \norm{u(t)}_{L^\infty(D)}\vee
 \norm{v(t)}_{L^\infty(D)}\ge R\right\}.
\end{equation}
By progressive measurability, $\tau_R$ is a stopping time, and local $L^\infty$ boundedness implies
\begin{equation}\label{eq:tauR-infty}
 \tau_R\uparrow\infty
 \qquad\text{a.s.}
\end{equation}
Choose globally Lipschitz functions $\widehat b_R$ and $\widehat\sigma_R$ agreeing with $b$ and $r^n$ on $[-R,R]$. On $[0,\tau_R)$ both $u$ and $v$ solve the same globally Lipschitz truncated SPDE
\[
 dw=\frac12\Delta w\,dt+\widehat b_R(w)\,dt+\widehat\sigma_R(w)\,dB_t.
\]
Pathwise uniqueness for that equation yields
\[
 u(t)=v(t),
 \qquad 0\le t<\tau_R.
\]
Letting $R\to\infty$ and using \eqref{eq:tauR-infty} gives $u=v$ for all $t\ge0$ almost surely.
\end{proof}
\begin{remark}
Actually, using the heat kernel estimates of the Laplacian operator one can show that  $u$ has a spatially continuous version, the almost-everywhere inequalities in \eqref{4.0} extend to all $x\in D$.
\end{remark}

\vskip 0.3cm
\noindent{\large\bf Acknowledgements}

This work is partially supported by the National Key R\&D Program of China (No. 2022YFA1006001), the National Natural Science Foundation of China (Nos. 12131019, 12571158, 12371151).

\end{document}